\theoremstyle{plain}
\newtheorem{prop}{Proposition}
\newtheorem{defin}{Definition}[section]
\newtheorem{lemma}[defin]{Lemma}
\newtheorem*{thm*}{Theorem}
\newtheorem*{prop*}{Proposition}
\theoremstyle{remark}
\newtheorem{rem}[defin]{Remark}
\DeclareRobustCommand{\stirling}{\genfrac\{\}{0pt}{}}
\begin{document}
\title{On the number of zeros of functions in analytic quasianalytic classes}
\author{Sasha Sodin\footnote{School of Mathematical Sciences, Queen Mary University of London, London E1 4NS, United Kingdom \& School of Mathematical Sciences, Tel Aviv University, Tel Aviv, 69978, Israel.  Email: a.sodin@qmul.ac.uk. This work is supported in part by the European Research Council starting grant 639305 (SPECTRUM) and by a Royal Society Wolfson Research Merit Award.}}
\maketitle

\begin{abstract}
A space of analytic functions in the unit disc with uniformly continuous derivatives is said to be quasianalytic if the boundary value of a non-zero function from the class can not have a zero of infinite multiplicity. Such classes were described in the 1950-s and 1960-s by Carleson, Rodrigues-Salinas and Korenblum.

A non-zero function from a quasianalytic space of analytic functions can only have a finite number of zeros in the closed disc. Recently, Borichev,  Frank, and Volberg proved an explicit estimate on the number of zeros, for the case of quasianalytic Gevrey classes. Here, an estimate of similar form for  general analytic quasianalytic classes is proved using a reduction to the classical quasianalyticity problem.
\end{abstract}

\section{Introduction}
\paragraph{Analytic quasianalyticity.} 
Let $W = (w_n)_{n =0}^\infty$ be a weight such that
\begin{equation}\label{eq:w}
w_n \in [1, +\infty], \quad \sum_{n= 0}^\infty \frac1{w_n} = 1~, \quad \frac{1}{w_n} = O(n^{-\infty})~. 
\end{equation}
Consider the following space $\mathfrak{A}_W$ of analytic functions in the unit disc $\mathbb D = \{|z| < 1\}$:
\begin{equation}\label{eq:aw}
\mathfrak{A}_W = \left\{ f(z) = \sum_{n=0}^\infty a_n z^n \, \big| \, \|f\|_W \overset{\text{def}}{=} \sup_n |a_n| w_n < \infty \right\}~.
\end{equation}
For each $k$, the $k$-th derivative $f^{(k)}$ of a function $f \in \mathfrak{A}_W$ is uniformly continuous in $\mathbb D$, and hence admits boundary values 
\[ f^{(k)}(e^{i \theta}) = \lim_{z \to e^{i\theta}, \, z \in \mathbb D} f^{(k)}(z)\]
on $\partial \mathbb D$. 

The class $\mathfrak{A}_W$ is said to be quasianalytic if a non-zero function $f \in \mathfrak{A}_W$ can not vanish with all derivatives at a point:
\begin{equation}\label{eq:quas} \forall k \geq 0 \, f^{(k)}(e^{i\theta_0}) = 0 \,\,  \Longrightarrow \,\,  f \equiv 0~, \,\, \text{i.e. } \forall n \geq 0\, a_n = 0~.\end{equation}
A result proved by Carleson \cite{Car}, Rodrigues-Salinas \cite{RodSal} and  Korenblum \cite{Kor} (which we state explicitly in Remark~\ref{rem:D} at the end of this introduction) implies that the condition
\begin{equation}\label{eq:cond*}
\sum_{k=1}^\infty \frac{M_{k-1}}{M_k} = \infty~, \quad \text{where } M_k = \sum_{n=0}^\infty \frac{n^{k/2}}{w_n}~,
\end{equation}
is sufficient for quasianalyticity. If the weights are sufficiently regular, e.g.\ $w_{n-1} \leq w_{n}$ and $w_{2n} \leq \sqrt{w_{n} w_{4n}}$ for $n \geq 1$, the condition (\ref{eq:cond*}) is also necessary for (\ref{eq:quas}).\footnote{In general, the condition (\ref{eq:cond*}) is not necessary. To ensure the quasianalyticity of the class $\mathfrak A_W$, it suffices for the measure $\sum_{n = 0}^\infty w_n^{-1} \delta_n$ to be Stieltjes-determinate; this condition is strictly weaker than (\ref{eq:cond*}).} For such regular weights, the condition (\ref{eq:cond*}) is equivalent to the divergence 
\begin{equation} \sum_{n \geq 0} \frac{\log w_n}{1 + n^{3/2}} = \infty~.\end{equation}
For example, the Gevrey weights
\begin{equation}\label{eq:gev}
W^{(\alpha, a)} = (w^{(\alpha, a)}_n)_{n \geq 0}~, \quad w_n^{(\alpha, a)} = \exp(a n^\alpha + c(\alpha, a))~,\end{equation}
where $c(\alpha, a) = \log \sum_{n \geq 0} \exp(- a n^\alpha)$ is determined by the normalisation (\ref{eq:w}), define a quasianalytic class if and only if $\alpha \geq 1/2$.  

More recently, the problem of analytic quasianalyticity (for the classes $\mathfrak D_M \supset \mathfrak A_W$ as in Remark~\ref{rem:D} below) was studied by Borichev \cite{Bor1}, who obtained a new proof of quasianalyticity in the quasianalytic case (\ref{eq:cond*}) as well as a bound on the growth of $f$ near a zero of infinite multiplicity in the case when (\ref{eq:cond*}) fails.

\paragraph{Zeros in the closed disc, and an application in spectral theory.}
If the space  $\mathfrak A_W$ is quasianalytic, a non-zero function $f \in \mathfrak A_W$ has a finite number of zeros in $\overline{\mathbb D}$, counting multiplicity. Indeed, if $f$ has an infinite number of zeros, these have an accumulation point $e^{i\theta_0} \in \partial \mathbb D$, and then $f$ vanishes with all derivatives at $e^{i\theta_0}$. 

This fact was exploited by Pavlov \cite{Pav1,Pav2} to show that a non-selfadjoint Schr\"odinger operator $H y  = -y''+q(x)y$ with a continuous complex potential $q: \mathbb R_+ \to \mathbb C$, defined on the semiaxis $[0, \infty)$ with the boundary condition $y(0) - hy'(0) = 0$, has a finite number of eigenvalues, counting multiplicity, if
\begin{equation}\label{eq:pavcond} b_k = \int_0^\infty |q(x)| x^{k+1} dx < \infty \text{ for }k \geq 0  \text{ and }  \int_0 \log \inf_k \frac{(\frac{b_{k+1}}{k+1} + b_k)t^k}{k!} dt =- \infty~.\end{equation}
For example, the condition $|q(x)| \leq C \exp(- c x^{\alpha})$ implies (\ref{eq:pavcond}) if and only if $\alpha \geq \frac12$. For $\alpha < \frac12$ Pavlov constructed a potential such $|q(x)| \leq C \exp(- c x^{\alpha})$ but $H$ has infinitely many eigenvalues.

Recently, Bairamov, \c{C}akar and Krall \cite{Bai} and Golinskii and Egorova \cite{GolEg} obtained counterparts of Pavlov's results for non-selfadjoint Jacobi matrices. Consider the operator $J$ acting on $\ell_2(\mathbb Z_+)$ via
\begin{equation}\label{eq:J} (J y) (n)= a_n y(n+1) + b_n y(n) +  \mathbbm{1}_{n \geq 1} c_{n-1} y(n-1)~, \quad n \geq 0~.\end{equation}
It follows from the results of \cite{GolEg} that if 
\begin{equation}\label{eq:pavcond'}  \sum_{k=1}^\infty m_k^{-1/k} = \infty~, \quad \text{where} \quad m_k = \sum_{n =0}^\infty \left( |b_n| + |a_n c_n - 1| \right) n^{k/2}~, \end{equation}
then $J$ has a finite number of eigenvalues, counting multiplicity. The condition (\ref{eq:pavcond'}) holds, for example, when 
\[ |b_n| + |a_n c_n - 1|  \leq C \exp(- c n^{\alpha})\] 
with $\alpha \geq 1/2$, whereas for $\alpha < 1/2$ there exists \cite{GolEg} such  an operator with infinitely many eigenvalues.

\paragraph{Estimates on the number of zeros.}
Denote by $n_f$ the number of zeros of $f$ in $\overline{\mathbb D}$, counting multiplicity, and let 
\begin{equation}\label{eq:NW}N_W(A) = \sup \left\{ n_f \, \big| \, f \in \mathfrak A_W~, \,\, |f(0)| \geq e^{-A} \|f\|_W \right\}~ , \quad A \geq 0~.\end{equation}
A compactness argument shows that $N_W(A)$ is finite for any $A < \infty$. However, it is also of interest to obtain explicit bound on $N_W$, and in particular to investigate the asymptotic behaviour as $A \to +\infty$. Using the method of Pavlov \cite{Pav1,Pav2}, such bounds can be translated into explicit bounds on the number of eigenvalues of the Schr\"odinger operator $H$ as well as of its Jacobi counterpart $J$. 

In view of these applications, Borichev, Frank and Volberg \cite{BFV} proved an explicit bound on $N_W(A)$ for the Gevrey weights (\ref{eq:gev}). Their results  imply that 
\begin{equation}\label{eq:bdgev} N_{W^{(\alpha, a)}} \leq \begin{cases}
C(\alpha, a) A^{\frac{\alpha}{2\alpha-1}}~, & \alpha \in (\frac12, \frac12 + \epsilon] \\
C_1(a) \exp(C_2(a) \sqrt{A})~, &\alpha = \frac12 
\end{cases}~, \end{equation}
with explicit  $C, C_1, C_2$, along with improved bounds for small values of $A$.  The argument of \cite{BFV} is based on the method of pseudoanalytic extension introduced by Dyn$'$kin \cite{Dyn} and applied to analytic quasianalyticity by Borichev in \cite{Bor1}.

Here we employ a reduction to the classical (Hadamard) quasianalyticity problem to prove
\begin{prop}\label{prop} Let $W$ be a weight as in (\ref{eq:w}) satisfying the condition (\ref{eq:cond*}), and let 
\begin{align}
&h(p) = \frac{M_{p-1}}{M_p}~, \,\, p \geq 1;& &H(p) = \sum_{k=1}^p h(k)~;&\\
&h^{-1}(\epsilon) = \min \left\{ p \geq 1 \, \big| \, h(p) \leq \epsilon \right\}~,& &
H^{-1}(R) = \min \left\{ p \geq 0 \, \big|  \, H(p) \geq R \right\}~.
& \end{align}
Then the quantity $N_W(A)$ from (\ref{eq:NW}) satisfies
\begin{equation}\label{eq:mainest}\begin{split} 
N_W(A) &\leq 300 \, h\left( 2 \max (p(A),  h^{-1} (\frac{1}{p(A)}))\right)^{-2}~, 
\text{ where } p(A) = H^{-1}(H(\lceil A+3\rceil) + 25 \sqrt{A})~. \end{split}\end{equation}
\end{prop}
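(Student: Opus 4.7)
I would aim to turn the hypothesis --- $f \in \mathfrak{A}_W$ with $N$ zeros in $\overline{\mathbb{D}}$ and $|f(0)| \ge e^{-A}\|f\|_W$ --- into a contradiction with a quantitative form of the classical Denjoy--Carleman theorem, by reducing from $\mathfrak{A}_W$ to a classical quasianalytic class on a real interval. The key step of the reduction has to produce, from $f$, a real-variable function $g$ whose derivatives lie in the class $C\{M_k\}$ with $M_k$ as in (\ref{eq:cond*}), rather than in the weaker class $C\{M_{2k}\}$ that one obtains by naively differentiating the boundary trace $F(\theta) = f(e^{i\theta})$. A natural candidate is the almost-periodic function $g(x) = \sum_n a_n e^{ix\sqrt{n}}$, whose derivatives enjoy the clean bound $|g^{(k)}(x)| \le \sum_n |a_n| n^{k/2} \le M_k \|f\|_W$ and whose Bohr mean recovers $a_0 = f(0) = \lim_{T \to \infty} T^{-1}\int_0^T g(x)\,dx$; the ``square-root frequency'' is precisely the device that produces $M_k$ in place of $M_{2k}$.

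Once such a surrogate $g$ is in place, I would localise the zeros of $f$. Jensen's formula gives $\sum_{|z_j|<1}\log(1/|z_j|) \le \log(\|f\|_\infty/|f(0)|) \le A$, so only $O(A)$ zeros can lie a fixed distance inside the disc; the remaining $N - O(A)$ accumulate in a thin annulus near $\partial\mathbb{D}$. Pigeonholing over $p$ equal arcs of $\partial\mathbb{D}$ of length $2\pi/p$ then places at least $(N - O(A))/p$ boundary projections of zeros in one short arc, and a Lagrange/Hermite interpolation against this cluster produces approximate vanishing of order $q \sim N/p$ at a single real-variable point.

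At that stage, Bang's quantitative quasianalyticity estimate for $C\{M_k\}$ takes over: high-order vanishing at a point, under derivative bounds $|g^{(k)}|\le M_k\|f\|_W$, forces smallness on long intervals, with the propagation cost governed by the cumulative sum $H$. Passing back through the Bohr mean gives $|f(0)| = |a_0| \le \|f\|_W \exp(-c H(q) + O(\sqrt{A}))$, the error term $O(\sqrt{A})$ accounting both for the finite averaging window and for the cost of each Bang propagation step. Comparing with $|f(0)| \ge e^{-A}\|f\|_W$ forces $H(q) \le cA + O(\sqrt{A})$, so $q$ is at most $p(A)$; combining with the pigeonhole relation $q \sim N/p$ and a companion constraint $p \le c\, h(q)^{-1}$ (needed so that the interpolation saving is not eaten by the propagation step) makes the two inequalities intersect precisely at $N \le 300\, h(2p^*)^{-2}$ with $p^* = \max(p(A), h^{-1}(1/p(A)))$.

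The main obstacle, in my view, is the first step: producing a real-variable surrogate $g$ whose derivative bounds really realise the $C\{M_k\}$-class with no hidden factors that break the Denjoy--Carleman condition, whose zero structure is tightly coupled to that of $f$, and whose inverse transformation back to $f(0)$ costs at most $O(\sqrt{A})$ in the exponent. This packaging of analyticity is the heart of the reduction to classical quasianalyticity, and the remaining ingredients --- pigeonhole, interpolation, Bang propagation, and parameter balancing --- are technically delicate but conceptually standard once the reduction is correctly set up.
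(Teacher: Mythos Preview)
Your architecture is essentially the paper's: the surrogate $\phi_f(x)=\sum_n a_n\cos(\sqrt{n}\,x)$ (your $g$) lands in $\mathfrak Q_M$ with the correct $M_k$, Jensen plus pigeonhole localises the zeros to a short arc (WLOG centred at $z=1$), interpolation in the form of Lavie's lemma makes $|f^{(l)}(1)|$ small, and Bang's inequality propagates. One ingredient you will need but do not mention is the bridge from smallness of $f^{(l)}(1)$ to smallness of $\phi_f^{(2k)}(0)$: since $\phi_f^{(2k)}(0)=(-1)^k\sum_n a_n n^k$, you have to expand $n^k=\sum_l \stirling{k}{l}\,n(n-1)\cdots(n-l+1)$ via Stirling numbers of the second kind to write $\phi_f^{(2k)}(0)$ as a controlled combination of the $f^{(l)}(1)$; this is the paper's Lemma~\ref{l:1}.

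The genuine gap is precisely the step you flag as the main obstacle, and the mechanism you propose does not close it. The Bohr mean over $[0,T]$ damps the $n$-th harmonic only like $O((T\sqrt n)^{-1})$, so the averaging error is of order $T^{-1}\|f\|_W$; getting this below $e^{-A}\|f\|_W$ forces $T$ to be exponential in $A$, and with $T$ of order $H(q)$ the comparison $e^{-A}\le |a_0|\le \sup_{[0,T]}|\phi_f|+C/T$ yields only $H(q)\lesssim e^{A}$, not the claimed $H(q)\le cA+O(\sqrt A)$. The paper does \emph{not} recover $a_0$ by averaging. Instead, Lemma~\ref{l:2} (Nazarov) produces a single point $x\in[0,9\sqrt A]$ with $|\phi_f(x)|\ge e^{-A-3}$, by averaging $\phi_f$ against the law of $S_N=X_1+\cdots+X_N$ with independent $X_j\sim\mathrm{Unif}[-\pi/\sqrt j,\pi/\sqrt j]$: the characteristic function $\prod_{j\le N}\operatorname{sinc}(\pi\xi/\sqrt j)$ vanishes exactly at $\xi=\sqrt 1,\ldots,\sqrt N$ and is at most $\pi^{-N}$ for $\xi\ge\sqrt{N+1}$, while $|S_N|\le 2\pi\sqrt N$; taking $N=\lceil A\rceil$ gives the $\sqrt A$ window with exponentially small leakage. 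The contradiction is then run directly through Bang's nested sets (show $0\in B_{p(A)}(\phi_f)$ from the zero cluster, and exhibit $x\notin B_{\lceil A+3\rceil}(\phi_f)$ within $9\sqrt A$ of $0$), rather than via an estimate of the form $|a_0|\le\exp(-cH(q)+O(\sqrt A))$.
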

\begin{rem} In our normalisation (\ref{eq:w}), $N_W(A) = 0$ for $A < \log 2$ as a consequence of the Rouch\'e theorem, hence (\ref{eq:mainest}) is meaningful for $A \geq \log 2$.\end{rem}

\begin{rem}
In the Gevrey case (\ref{eq:gev}), 
\[ h(p) \asymp p^{-\frac1{2\alpha}}~, \quad H(p) \asymp \begin{cases} p^{1-\frac{1}{2\alpha}}~, & \frac12 < \alpha \leq 1 \\ \log p~, & p = \frac12 \end{cases}~,\]
hence the bound (\ref{eq:mainest}) implies that
\begin{equation}\label{eq:bdgev'}N_{W^{(\alpha, a)}}(A)\leq \begin{cases}
C'(\alpha, a) A^{\frac{2\alpha}{2\alpha-1}}~, & \alpha \in (\frac12, 1] \\
C'_1(a) \exp(C'_2(a) \sqrt{A})~, &\alpha = \frac12 
\end{cases}~, \end{equation}
which is similar to (\ref{eq:bdgev}), albeit with an inferior exponent for $\alpha > \frac12$. 
\end{rem}

\begin{rem}The estimate (\ref{eq:mainest}) remains valid in the non-quasianalytic situation, provided that $A$ is sufficiently small for the right-hand side to be finite, i.e.\
\begin{equation}\label{eq:nonq} \sum_{k > \lceil A+3 \rceil} h(k) > 25 \sqrt{A}~.\end{equation}
Note that the condition (\ref{eq:nonq}) may hold for large $A$ (particularly, for $A \geq \log 2$) if the series $\sum M_{k-1}/M_k$ converges slowly enough. \end{rem}

\begin{rem}\label{rem:D} Proposition~\ref{prop} also yields bound on the number of zeros of a function in the Carleson--Salinas--Korenblum class
\[ \mathfrak{D}_M = \left\{ f(z) = \sum_{n=0}^\infty a_n z^n \, \big| \, \|f\|_{\mathfrak D_M} \overset{\text{def}}{=} \sup_k \sup_{|z| < 1} \frac{|f^{(k)}(z)|}{M_{2k}} < \infty \right\}\]
associated with a positive sequence $M = (M_k)_{k \geq 0}$. We sketch the (well-known) reduction: first, one may assume without loss of generality that $M_{k} \leq \sqrt{M_{k-1}M_{k+1}}$. The theorem of Carleson--Salinas--Korenblum asserts that in this case $\mathfrak{D}_M$ is quasianalytic if and only if 
\begin{equation}\label{eq:quas2} \sum_{k \geq 0} \frac{M_{k-1}}{M_k} = \infty~. \end{equation}
Construct the weight
\[ W(M)= (w_n)_{n \geq 0}~, \,\, w_n =  \frac{\tilde w_n}{\sum_{m=0}^\infty \tilde w_m}~, \,\, \text{where }\tilde w_m =  \max_{0 \leq k \leq m} \frac{m(m-1)\cdots (m-k+1)}{M_{2k}}\]
so that $\mathfrak D_M \subset \mathfrak A_{W(M)}$. One can check that if (\ref{eq:quas2}) holds, then also 
\[ M^1 = (M^1_k)_{k \geq 0}~, \quad M_k^1 = \sum_{n \geq 0} \frac{n^{k/2}}{w_n} \]
satisfies $\sum_{k \geq 0} M_{k-1}^1 / M_{k}^1 = \infty$. Therefore Proposition~\ref{prop} applied to $W(M)$ yields an estimate on 
\begin{equation}
\label{eq:NM}N_{\mathfrak D_M}(A) = \sup \left\{ n_f \, \big| \, f \in \mathfrak D_M~, \,\, |f(0)| \geq e^{-A} \|f\|_{\mathfrak D_M} \right\}~ , \quad A \geq 0~,\end{equation}
for an arbitrary quasianalytic $\mathfrak D_M$.
\end{rem}

\section{Proof of Proposition~\ref{prop}} The proof is based on the following construction, similar to the one using which the determinacy criteria for the moment problem in the Stieltjes case are derived from those in the Hamburger case (see \cite{me} for a further application of a similar construction). To every 
\[ f(z) = \sum_{n=0}^\infty a_n z^n \in \mathfrak A_W \]
we associate a function
\[ \phi_f(x) = \sum_{n=0}^\infty a_n \cos (\sqrt{n} x)~, \quad x \in \mathbb R~. \]
We have: 
\[|\phi_f^{(k)}(x)| \leq \sum_{n=0}^\infty |a_n| n^{k/2} \leq \|f\|_W M_k~, \]
i.e.\ $\phi_f$ lies in the space 
\[ \mathfrak Q_M = \left\{ \phi \in C^\infty(\mathbb R) \, \big| \, \|\phi\|_{\mathfrak Q_M} \overset{\text{def}}{=} \sup_k \frac{\|\phi^{(k)}\|_\infty}{M_k} < \infty \right\} \]
defined by the sequence $M = (M_k)_{k \geq 0}$ of (\ref{eq:cond*}). According to the Denjoy--Carleman theorem in the form of Mandelbrojt (see \cite{Bang} or \cite{Man}, and also the comment following Lemma~\ref{l:bang} below), the condition $\sum_{k=1}^\infty M_{k-1}/M_k = \infty$ implies that the class $\mathfrak Q_M$ is quasianalytic.\footnote{In our case, the sequence $M$ is logarithmically convex, i.e.\  $M_k \leq \sqrt{M_{k+1} M_{k-1}}$ for $k \geq 1$, hence the condition $\sum_{k=1}^\infty M_{k-1}/M_k = \infty$ is necessary and sufficient for the quasianalyticity of $\mathfrak Q_M$.}  This  implies  the sufficiency part of the Carleson--Salinas--Korenblum condition (\ref{eq:cond*}) for the quasianalyticity of $\mathfrak A_W$: indeed, if $f$ vanishes with all derivatives at $1$, then $\phi_f$ vanishes with all derivatives at $0$, and hence $\phi_f$ and $f$ are identically zero.

\medskip
To prove Proposition~\ref{prop}, we make these considerations quantitative. The argument rests on two lemmas. The first one asserts that $\phi_f$ and its first few derivatives are small at $0$ if $f$ has many zeros near $1$. 
\begin{lemma}\label{l:1} Let $\epsilon \in (0, 1)$, and let $m$ be the number of zeros of $f \in \mathfrak A_W$ in the domain $\{|z| \leq 1~, |z -1| < \epsilon\}$, counted with multiplicity. Then
\[ |\phi_f^{(2k)}(0)| \leq \left( \frac{4e\epsilon}m \right)^{m-k} M_{2m} \|f \|_W~, \quad 0 \leq k \leq \min(\frac{m}{2}, \sqrt{\frac{m}{8\epsilon}})~. \]
\end{lemma}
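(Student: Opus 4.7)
The plan is to identify $\phi_f^{(2k)}(0) = (-1)^k A_k$ with $A_k = \sum_n a_n n^k$ (from the Taylor expansion of $\cos(\sqrt n \, x)$ at $x=0$) and exploit the $m$ linear constraints $\sum_n a_n z_l^n = 0$ coming from the zeros (with the usual higher-derivative analogues if multiplicities exceed one). For any coefficients $c_1,\ldots,c_m$ one has
\[
A_k = \sum_n a_n\Bigl(n^k - \sum_l c_l z_l^n\Bigr), \qquad |A_k| \leq \|f\|_W \sum_n \frac{|n^k - \sum_l c_l z_l^n|}{w_n},
\]
so the task reduces to approximating the sequence $(n^k)_n$ by the span of $(z_l^n)_n$ in the weighted $\ell^1$-norm.

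Using the binomial formula $z_l^n = \sum_j \binom{n}{j}(-\eta_l)^j$ with $\eta_l = 1-z_l$ (so $|\eta_l|<\epsilon$) and the Stirling-number identity $n^k = \sum_{j=0}^k S(k,j)\,j!\,\binom{n}{j}$, the residual reorganizes into $\sum_j \binom{n}{j}[S(k,j)j! - (-1)^j \mu_j]$ with $\mu_j := \sum_l c_l \eta_l^j$. The $m$ free parameters $c_l$ suffice to prescribe $\mu_0,\ldots,\mu_{m-1}$; I would set $\mu_j = (-1)^j S(k,j)j!$ for $j\leq k$ and $\mu_j = 0$ for $k<j<m$, killing every $j<m$ term of the residual. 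What remains is the $j\geq m$ tail, controlled by $|\mu_j|$; the crucial step is to bound these uniformly in the configuration of the zeros, since solving for the $c_l$ directly is hopeless once the zeros cluster and the Vandermonde system degenerates. I would achieve this via the rational generating function
\[
M(t) = \sum_{j\geq 0}\mu_j t^j = \frac{P(t)}{Q(t)}, \qquad Q(t) = \prod_l(1-\eta_l t),
\]
where $P$ has degree $<m$, determined by the prescribed moments as the truncation $P = [qQ]_{<m}$ with $q(t) = \sum_{j\leq k}(-1)^j S(k,j)j!\,t^j$. On the circle $|t|=R<1/\epsilon$ one has $|Q(t)|\geq(1-R\epsilon)^m$, and summing moduli of the coefficients of $qQ$ yields $|P(t)|\leq q_+(R)(1+R\epsilon)^m$ with $q_+(R)=\sum_j S(k,j)j!R^j$; Cauchy's formula then gives $|\mu_j|\leq R^{-j}q_+(R)\bigl((1+R\epsilon)/(1-R\epsilon)\bigr)^m$, and optimization in $R$ (essentially $R\sim 2m/((j-k)\epsilon)$) produces $|\mu_j|\leq k!(c\epsilon)^{j-k}$ for an absolute constant $c$.

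Combining with the elementary $\sum_n \binom{n}{j}/w_n \leq M_{2j}/j!$, one arrives at $|A_k| \leq \|f\|_W \sum_{j\geq m}|\mu_j|M_{2j}/j!$. The hypothesis $k\leq m/2$ ensures $k!/m!\leq (2/m)^{m-k}$, so that Stirling's formula turns the leading $j=m$ term into $(4e\epsilon/m)^{m-k}M_{2m}$, while the hypothesis $k\leq\sqrt{m/(8\epsilon)}$ guarantees that the tail $\sum_{j>m}$ is a geometric series dominated by its leading term. The hard part is keeping the combinatorial prefactors tight through the Stirling-number, binomial and contour-integration estimates to obtain the precise constant $4e$; a crude implementation of the strategy above produces constants that are larger by small factors, and extracting $4e$ requires a careful $j$-dependent choice of $R$ combined with sharp Stirling bounds. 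The generating-function viewpoint is essential in maintaining uniformity in the zero configuration.
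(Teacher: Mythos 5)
Your reduction of the problem to bounding $\sum_n |n^k-\sum_l c_l z_l^n|/w_n$, and the generating-function device for controlling the moments $\mu_j=\sum_l c_l\eta_l^j$ uniformly over clustered configurations, are both sound as far as they go (including the confluent case). The fatal step is the last one: the bound
\[
|A_k|\ \leq\ \|f\|_W\sum_{j\geq m}|\mu_j|\,\frac{M_{2j}}{j!}
\]
is obtained by estimating the binomial expansion of $z_l^n=(1-\eta_l)^n$ term by term, which destroys the cancellation that makes $|z_l^n|\leq 1$: the moduli $\sum_j\binom{n}{j}|\eta_l|^j=(1+|\eta_l|)^n$ grow exponentially in $n$. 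Concretely, since a single zero at distance $\epsilon'$ slightly less than $\epsilon$ already forces $|\mu_j|\asymp (\epsilon')^j$, no configuration-uniform bound on $\mu_j$ can decay faster than $\epsilon^j$ up to $j$-independent factors, and then
\[
\sum_{j\geq m}\epsilon^j\,\frac{M_{2j}}{j!}\ \geq\ \sum_n \frac{1}{w_n}\sum_{j\geq m}\frac{(\epsilon n)^j}{j!}\ \gtrsim\ \sum_n\frac{e^{\epsilon n}}{w_n}\,,
\]
which diverges for every subexponential weight --- in particular for all the Gevrey weights with $\alpha<1$, including the critical case $\alpha=\tfrac12$ that the paper cares most about. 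The hypothesis $k\leq\sqrt{m/(8\epsilon)}$ constrains $k$ and has no bearing on the growth of $M_{2j}/M_{2j-2}$ for $j>m$, so it cannot make this tail geometric; your series is simply infinite in the cases of interest. (The separate issue you flag yourself --- that a crude optimization in $R$ yields a constant visibly larger than $4e$ even for the $j=m$ term --- is real but secondary.)

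The paper's proof sidesteps the tail entirely by using the \emph{exact} finite identity $n^k=\sum_{l=0}^k S(k,l)\,n(n-1)\cdots(n-l+1)$, so that $\phi_f^{(2k)}(0)=\pm\sum_{l=0}^k S(k,l)f^{(l)}(1)$ with no remainder, and then imports the zero information through a Lagrange-interpolation inequality of Lavie: if $f$ has $m$ zeros in a convex set $R$ of diameter $\delta$, then $|f^{(l)}|\leq \delta^{m-l}\max_R|f^{(m)}|/(m-l)!$ on $R$. Applied on $\{|z|\leq 1,\ |z-1|\leq\epsilon\}$ with $\max|f^{(m)}|\leq M_{2m}\|f\|_W$, this gives $|f^{(l)}(1)|\leq (2\epsilon)^{m-l}M_{2m}\|f\|_W/(m-l)!$, and the sum over $l\leq k$ is a genuinely finite geometric series controlled by the two hypotheses $m\geq 2k$ and $m\geq 8k^2\epsilon$. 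In other words, the "many zeros" information is converted into smallness of the first $k$ derivatives of $f$ at $1$ (all measured against the single quantity $M_{2m}$), rather than into an $\ell^1_w$-approximation of $(n^k)_n$ by the sequences $(z_l^n)_n$; it is precisely this that keeps every term tied to $M_{2m}$ and avoids the divergent moments $M_{2j}$, $j>m$. If you want to rescue your dual approach you would need a bound on the residual $n^k-\sum_l c_l z_l^n$ that is polynomial in $n$ uniformly over clustered zeros, and the residue computation shows that the partial-fraction coefficients blow up exactly in that regime.
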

The second lemma guarantees that there is a point not too far from $0$ at which $\phi_f$ is not too small. The current version, with the sharp power of $A$, was kindly communicated by F.~Nazarov.
\begin{lemma}\label{l:2}
Let $\phi(x) = \sum_{n=0}^\infty a_n \cos (\sqrt n x)$ be such that $|a_0| \geq e^{-A}$  and $\sum |a_n| \leq 1$. Then there exists $x \in [0, 9 \sqrt A]$ such that $|\phi(x)| \geq e^{-A-3}$.
\end{lemma}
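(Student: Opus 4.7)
The strategy is to construct a smoothing kernel $K$ on $\mathbb{R}$ whose Fourier transform vanishes on the entire positive spectrum $\{\sqrt{n}:n\geq 1\}$ of $\phi$ and equals $1$ at frequency $0$, so that $\int_\mathbb{R}\phi(x)K(x)\,dx = a_0$ exactly. The desired lower bound on $\sup_{[0,L]}|\phi|$ with $L = 9\sqrt{A}$ then drops out of this identity together with an estimate on the mass of $K$ outside $[-L,L]$. The natural annihilator is $\sin(\pi\xi^2)/(\pi\xi^2)$, whose zero set matches $\{\pm\sqrt{n}\}_{n\geq 1}$ exactly.

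\emph{The kernel and the interpolation identity.} For a parameter $u>0$, define $K_u = \mathcal{F}^{-1}\widehat{K}_u$ where
\begin{equation*}
\widehat{K}_u(\xi) = e^{-u\xi^2}\,\frac{\sin(\pi\xi^2)}{\pi\xi^2}
\end{equation*}
(extended by continuity to $1$ at $\xi=0$). This is real, even, and Schwartz. One has $\widehat{K}_u(0)=1$ and $\widehat{K}_u(\sqrt{n}) = e^{-un}\sin(\pi n)/(\pi n) = 0$ for every integer $n\geq 1$. Since $K_u$ is real and even, $\int\cos(\sqrt{n}x)K_u(x)\,dx = \widehat{K}_u(\sqrt{n})$; summing against $\sum_n a_n$ (absolute convergence is immediate from $\sum|a_n|\leq 1$ and $K_u\in L^1$) yields
\begin{equation*}
\int_{-\infty}^{\infty}\phi(x)\,K_u(x)\,dx = \sum_{n\geq 0} a_n\,\widehat{K}_u(\sqrt{n}) = a_0.
\end{equation*}

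\emph{Quantitative estimates on $K_u$.} Using $\sin(\pi\xi^2)/(\pi\xi^2)=\int_0^1\cos(\pi s\xi^2)\,ds$, Fubini, and the complex Gaussian integral in $\xi$, one obtains the pointwise bound
\begin{equation*}
|K_u(x)| \;\leq\; \frac{1}{2\sqrt{\pi}}\int_0^1\frac{e^{-\alpha_s x^2}}{(u^2+\pi^2 s^2)^{1/4}}\,ds,\qquad \alpha_s=\frac{u}{4(u^2+\pi^2 s^2)}.
\end{equation*}
Integrating in $x$ gives $\|K_u\|_{L^1}\leq (1+\pi^2/u^2)^{1/4}$; a standard Mills-ratio estimate and the monotonicity of $\alpha_s$ give a tail bound of the form $\int_{|x|>L}|K_u|\,dx\leq C(u)L^{-1}\exp(-L^2 u/(4(u^2+\pi^2)))$.

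\emph{Conclusion.} Splitting the identity $a_0=\int\phi K_u$ at $|x|=L$, using $|\phi|\leq\sum|a_n|\leq 1$ on $\mathbb{R}$ and $|\phi|\leq M:=\sup_{[0,L]}|\phi|$ on $[-L,L]$ (by the evenness of $\phi$), gives
\begin{equation*}
|a_0|\;\leq\; M\,\|K_u\|_{L^1} + \int_{|x|>L}|K_u(x)|\,dx.
\end{equation*}
With $L=9\sqrt{A}$ the exponent $L^2u/(4(u^2+\pi^2)) = 81Au/(4(u^2+\pi^2))$ strictly exceeds $A$ exactly for $u$ in the interval on which $4u^2-81u+4\pi^2<0$, i.e.\ approximately $[0.5,19.75]$. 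Choosing any fixed $u$ in this interval (say $u=10$, which gives $\|K_u\|_{L^1}\leq (1+\pi^2/100)^{1/4}\leq 1.025$ and tail $\leq C\,A^{-1/2}e^{-1.84A}$), we obtain $M\geq (|a_0|-\mathrm{tail})/\|K_u\|_{L^1}\geq (e^{-A}-o(e^{-A}))/1.025$, which exceeds $e^{-A-3}$ with room to spare. The very small values of $A$ not covered (if any) are dispatched directly from $|\phi(0)|\geq 2|a_0|-1\geq 2e^{-A}-1$, which is $\geq e^{-A-3}$ for $A\leq\log(2-e^{-3})$.

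\emph{Main obstacle.} The only delicate point is the joint calibration of $u$: it must be large enough that $\|K_u\|_{L^1}$ is close to $1$ (so the divisor does not destroy the $e^{-3}$ of slack), and small enough that $\alpha_1 L^2 = 81Au/(4(u^2+\pi^2))$ exceeds $A$ for $L=9\sqrt{A}$. The existence of an interval of admissible $u$ is precisely the source of the constant $9$; a sharper constant could be obtained by choosing $u$ at the center of the admissible interval and optimizing. All other ingredients---Fourier inversion, Fubini, and the Mills-ratio tail estimate---are entirely routine.
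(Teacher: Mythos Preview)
Your approach is correct and genuinely different from the paper's. The paper (following Nazarov) averages $\phi$ against the law of $S_N = X_1+\cdots+X_N$ with independent $X_j\sim\operatorname{Unif}[-\pi/\sqrt{j},\pi/\sqrt{j}]$; the characteristic function $\prod_{j\le N}\frac{\sin(\pi\xi/\sqrt{j})}{\pi\xi/\sqrt{j}}$ vanishes at $\sqrt{1},\dots,\sqrt{N}$ and is bounded by $\pi^{-N}$ for $\xi\geq\sqrt{N+1}$, so $\mathbb{E}\,\phi(S_N)=a_0+O(\pi^{-N})$. Because this measure is \emph{compactly supported} in $[-2\pi\sqrt{N},2\pi\sqrt{N}]$, setting $N=\lceil A\rceil$ finishes the proof without any tail estimate. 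Your kernel instead annihilates \emph{all} the frequencies $\sqrt{n}$ at once via the factor $\sin(\pi\xi^2)/(\pi\xi^2)$, giving the clean identity $\int\phi K_u=a_0$, but you then pay for the loss of compact support with a Gaussian tail bound and a numerical calibration of $u$. The paper's route is more elementary---no complex Gaussian integrals, no constants to chase, and the $9$ drops out of $2\pi\sqrt{\lceil A\rceil}$---while yours yields a sharper interpolation identity and would adapt more easily to other spectra admitting an explicit entire annihilator. One point worth tightening: you should verify explicitly that the range handled by the main estimate (once $C(u)$ is computed at $u=10$, roughly $A\gtrsim 0.2$) overlaps the range $A\leq\log(2-e^{-3})\approx 0.67$ covered by your $\phi(0)$ argument; it does, but the sketch leaves this implicit.
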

To derive the proposition from the two lemmas, we use a propagation of smallness argument due to Bang \cite{Bang}, which we state as 
\begin{lemma}\label{l:bang}
Let $M = (M_k)_{k \geq 0}$ be a sequence of positive numbers such that $M_k \leq \sqrt{M_{k-1} M_{k+1}}$ for $k \geq 1$. For $\phi \in \mathfrak Q_M$, define
a nested sequence of sets $\mathbb R = B_0(\phi) \supset B_1(\phi) \supset B_2(\phi) \supset \cdots$ via
\[ B_p(\phi) = \left\{ x \in \mathbb R \, \big| \, \forall 0 \leq k < p \, |\phi^{(k)}(x)| \leq e^{k-p} M_k \|\phi\|_{\mathfrak Q_M}~\right\}~. \]
Then for $0 \leq q < p$
\[ \operatorname{dist}(B_p(\phi), \mathbb R \setminus B_q(\phi)) \geq \frac{1}{e} (H(p) - H(q)) = \frac{1}{e} \sum_{k=q+1}^p \frac{M_{k-1}}{M_k}~.\]
\end{lemma}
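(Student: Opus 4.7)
The plan is to reduce the lemma to the one-step propagation estimate
\[ \operatorname{dist}\!\left(B_p(\phi),\, \mathbb R \setminus B_{p-1}(\phi)\right) \ge \frac{1}{e}\,\frac{M_{p-1}}{M_p}~,\]
and then iterate along the nested chain $B_p(\phi) \subset B_{p-1}(\phi) \subset \cdots \subset B_q(\phi)$. To establish this one-step estimate, assume without loss of generality that $\|\phi\|_{\mathfrak Q_M}=1$, fix $x_0 \in B_p(\phi)$, and expand each derivative of order $k < p$ via the Taylor formula with integral remainder,
\[ \phi^{(k)}(x) = \sum_{j=0}^{p-k-1} \frac{\phi^{(k+j)}(x_0)}{j!}(x-x_0)^j + \int_{x_0}^{x} \frac{\phi^{(p)}(s)}{(p-k-1)!}(x-s)^{p-k-1}\,ds~.\]
The polynomial coefficients are controlled by the defining bounds $|\phi^{(k+j)}(x_0)| \le e^{k+j-p} M_{k+j}$ of $B_p(\phi)$ (valid since $k+j < p$), and the remainder by the global estimate $\|\phi^{(p)}\|_\infty \le M_p$.

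The crucial arithmetic input is log-convexity: the ratios $r_k := M_{k-1}/M_k$ are nonincreasing, so $M_{k+j}/M_k \le r_p^{-j}$ whenever $k+j \le p$. Setting $d = |x - x_0| \le M_{p-1}/(eM_p) = r_p/e$, each factor $(M_{k+j}/M_k)\,d^j$ is bounded by $e^{-j}$. A straightforward computation then shows that the $j$-th polynomial term contributes at most $e^{k-p}M_k/j!$, while the integral remainder contributes $e^{k-p}M_k/(p-k)!$, so altogether
\[ |\phi^{(k)}(x)| \le e^{k-p}M_k \sum_{j=0}^{p-k}\frac{1}{j!} < e^{k-(p-1)}M_k~,\]
since $\sum_{j\ge 0} 1/j! = e$. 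This gives $x \in B_{p-1}(\phi)$ and completes the one-step estimate.

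The passage from the one-step estimate to the full statement proceeds by induction on $p - q$, based on the additive decomposition
\[ \operatorname{dist}\!\left(B_p(\phi), \mathbb R \setminus B_q(\phi)\right) \ge \operatorname{dist}\!\left(B_p(\phi), \mathbb R \setminus B_{q+1}(\phi)\right) + \operatorname{dist}\!\left(B_{q+1}(\phi), \mathbb R \setminus B_q(\phi)\right)~.\]
To verify this decomposition, note that the segment joining any $x \in B_p(\phi)$ to any $y \notin B_q(\phi)$ starts inside and ends outside the closed set $B_{q+1}(\phi)$, hence meets $\partial B_{q+1}(\phi)$ at some point $z$; then $|x-z| \ge \operatorname{dist}(B_p(\phi), \mathbb R \setminus B_{q+1}(\phi))$ and $|z-y| \ge \operatorname{dist}(B_{q+1}(\phi), \mathbb R \setminus B_q(\phi))$ combine to give the stated inequality.

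The main obstacle will be the balancing in the one-step Taylor estimate, where log-convexity, the specific choice of step size $r_p/e$, and the identity $\sum_{j\ge 0}1/j! = e$ must conspire so that the polynomial contribution and the integral remainder together recover exactly a truncated exponential series; the critical constant $1/e$ in the statement originates here. The remaining induction and exit-point arguments are routine once the one-step estimate is secured.
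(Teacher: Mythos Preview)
Your argument is correct and follows essentially the same route as the paper: both establish the one-step inclusion $\{|y-x|\le \tfrac1e M_{p-1}/M_p,\ x\in B_p\}\subset B_{p-1}$ by Taylor expansion of $\phi^{(k)}$ to order $p-k$, bound $M_{k+j}\le M_k(M_p/M_{p-1})^j$ via log-convexity, and sum the resulting series to recover exactly one factor of $e$. The only cosmetic differences are that the paper uses the Lagrange form of the remainder and leaves the chaining from $p$ down to $q$ implicit, whereas you use the integral remainder and spell out the telescoping via the boundary-crossing argument; both are equally valid.
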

(As pointed out in \cite{Bang}, this lemma readily implies the Denjoy--Carleman theorem mentioned above.)  The proofs of the lemmas are postponed to the next section, and we now proceed to

\begin{proof}[Proof of Proposition~\ref{prop}]Without loss of generality we may assume that $\|f\|_W = 1$, so that $\|\phi_f\|_{\mathfrak Q_M} \leq 1$. Denote by $n_f(S)$ the number of zeros of $f$ in $S \subset \overline{ \mathbb D}$, counting multiplicity. By Jensen's formula
\[\begin{split} -A = \log |f(0)| 
&= \sum_{f(z) = 0} \log |z| + \int_0^{2\pi} \log |f(e^{i\theta})| \frac{d\theta}{2\pi}\\ 
&\leq \sum_{f(z) = 0}  \mathbbm{1}_{|z| < 1 - \frac\epsilon2}\log |z|\\
&\leq\log(1-\frac\epsilon2) \, n_f(\{|z| < 1-\frac\epsilon2\})~,\end{split} \]
hence
\begin{equation}\label{eq:jenbd}n_f \leq \frac{2}{\epsilon} A + n_f(\{|z| \geq 1 - \frac\epsilon 2\}) \leq \frac{1}{\epsilon} (2 A + 8 m_\epsilon)~, \end{equation}
where 
\[ m_\epsilon = \sup_\theta n_f(\{|z| \leq 1~, \, |z - e^{i\theta}| < \epsilon\})~. \]
Without loss of generality the supremum in the definition of $m_\epsilon$ is achieved when $\theta =0$. 

Let $p(A) = H^{-1}(H(\lceil A+3 \rceil) + 25\sqrt{A})$, and let 
\[ m = \max(p(A), h^{-1}(\frac{1}{p(A)}))~, \quad
\epsilon = \frac{1}{4e^2} \frac{mM_{2m-1}^2}{M_{2m}^2}~, \]
so that
\[ \sqrt\frac{m}{2\epsilon} = \sqrt{2} e \frac{M_{2m}}{M_{2m-1}} \geq \frac{M_m}{M_{m-1}} = \frac{1}{h(m)} \geq p(A)~. \] 
Let us show that $m_\epsilon < m$. Assume the contrary. Observe that 
\[ \frac{p(A)}{2} \leq \min (\frac m 2, \sqrt{\frac{m}{8 \epsilon}})~,\]
therefore Lemma~\ref{l:1} yields  
\begin{equation} |\phi_f^{(2k)}(0)| \leq \left( \frac{4e\epsilon}m \right)^{m-k} M_{2m}\end{equation}
for $0 \leq k \leq p(A)/2$. Estimating
\[ \frac{\left(\frac{4e\epsilon}{m}\right)^{l-1} M_{2l-2}}{\left(\frac{4e\epsilon}{m}\right)^{l} M_{2l}} = e \frac{M_{2l-2}}{M_{2l}} \frac{M_{2m}^2}{M_{2m-1}^2} \geq e~, \quad 0 \leq l \leq m~,  \]
we obtain that 
\[ \left( \frac{4e\epsilon}m \right)^{m} M_{2m} 
=  \left( \frac{4e\epsilon}m \right)^{k} M_{2k} \prod_{l=k+1}^m \frac{\left(\frac{4e\epsilon}{m}\right)^{l} M_{2l}}{\left(\frac{4e\epsilon}{m}\right)^{l-1} M_{2l-2}}
\leq e^{-(m-k)}   \left( \frac{4e\epsilon}m \right)^{k} M_{2k}~. \] 
Therefore (using that $3k \leq 3p(A)/2 \leq m + p(A)$)
\[  |\phi_f^{(2k)}(0)|  \leq e^{-(m-k)} M_{2k} \leq e^{-(2k - p(A))} M_{2k}~.\]
Trivially, $\phi_f^{(2k+1)}(0) = 0$ for all $k$. Hence $0 \in B_{p(A)}(\phi_f)$. On the other hand, by Lemma~\ref{l:2}, 
\[ \operatorname{dist}(0, \mathbb R \setminus B_{\lceil A+3 \rceil}(\phi_f)) \leq 9 e  \sqrt{A}~. \]
Applying Lemma~\ref{l:bang}, we deduce that 
\[ H(p(A)) - H(\lceil A+3 \rceil) \leq 9e \sqrt{A} < 25 \sqrt{A}~,\]
in contradiction with the definition of $p(A)$. This completes the proof of the estimate $m_\epsilon < m$. 

Returning to (\ref{eq:jenbd}) and recalling that $m \geq p(A) \geq A$, we obtain:
\[ n_f \leq  \frac{1}{\epsilon} (2 A + 8 m_\epsilon) \leq \frac{10m}{\epsilon} \leq 300 h(2m)^{-2}~.\qedhere\]
\end{proof}

\section{Proofs of the lemmas}
In the proof of the Lemma~\ref{l:1}, we use the following  lemma which is borrowed from the work of M.~Lavie \cite[Lemma 3]{Lav}.
\begin{lemma}\label{l:lag}
Let $R \subset \mathbb C$ be a closed convex set of diameter $\delta$. If $f(z)$ is analytic in $R$ and vanishes at $m$ points of $R$ (counting multiplicity), then 
\begin{equation}\label{eq:lavie} \max_{z \in R} |f^{(k)}(z)| \leq \frac{\delta^{m-k}}{(m-k)!} \max_{z \in R} |f^{(m)}(z)|~, \quad 0 \leq k \leq m~. \end{equation}
\end{lemma}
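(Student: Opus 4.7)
The plan is to use the factorization afforded by the zeros, combined with Hermite--Genocchi divided-difference theory.

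First I would factor $f(z) = \omega(z) g(z)$, where $\omega(z) = \prod_{i=1}^m (z - \zeta_i)$ is built from the zeros $\zeta_1, \ldots, \zeta_m \in R$ of $f$ (counted with multiplicity); the quotient $g$ extends analytically to $R$ (all singularities are removable) and coincides with the divided difference $f[\zeta_1, \ldots, \zeta_m, z]$.

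The case $k = 0$ then falls out immediately from the Hermite--Genocchi integral representation
\[ g(z) = \int_{\Delta_m} f^{(m)}\!\left( t_0 z + t_1 \zeta_1 + \cdots + t_m \zeta_m \right) dt_1 \cdots dt_m, \]
with $\Delta_m = \{t_i \geq 0 : \sum_{i=1}^m t_i \leq 1\}$ and $t_0 = 1 - t_1 - \cdots - t_m$: convexity of $R$ ensures that the argument of $f^{(m)}$ stays in $R$, $\mathrm{vol}(\Delta_m) = 1/m!$, and $|\omega(z)| \leq \delta^m$ on $R$, producing the bound $|f(z)| \leq \delta^m/m! \cdot \max_R |f^{(m)}|$.

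For $k \geq 1$, the natural real-variable strategy would be downward induction on $k$: Rolle's theorem yields $m-k$ zeros of $f^{(k)}$ in $R$, and applying the $k=0$ case to $f^{(k)}$ (whose $(m-k)$-th derivative equals $f^{(m)}$) delivers exactly the stated bound. The hard part will be replacing Rolle, which fails for complex-analytic functions. My approach is to apply Leibniz to $(\omega g)^{(k)} = \sum_{j=0}^k \binom{k}{j} \omega^{(j)}(z) g^{(k-j)}(z)$, substitute the formula $g^{(l)}(z) = l!\, f[\zeta_1, \ldots, \zeta_m, \underbrace{z, \ldots, z}_{l+1}]$, and then exploit algebraic cancellations in the resulting sum to re-express $f^{(k)}(z)$ as an integral of $f^{(m)}$ alone against a measure on a product of simplices, of total mass $\delta^{m-k}/(m-k)!$. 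The crux is producing these cancellations cleanly, so that only $f^{(m)}$ appears on the right-hand side rather than $f^{(m+1)}, \ldots, f^{(m+k)}$ (which are what emerges naively from Hermite--Genocchi applied to each $g^{(k-j)}$); this is the computation carried out by Lavie in \cite{Lav}, which I would follow.
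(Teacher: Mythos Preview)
Your proposal is sound and lands on one of the two routes the paper itself records. The paper does not give a self-contained proof of this lemma: it cites Lavie and states that his argument is induction on the number of zeros via the identity
\[
\frac{d^k}{dz^k}\,\frac{f(z)}{\alpha-z}=(\alpha-z)^{-k-1}\int_\alpha^z\frac{f^{(k+1)}(\zeta)}{(\alpha-\zeta)^{k}}\,d\zeta,\qquad f(\alpha)=0,
\]
and then remarks that the Hermite divided-difference formula gives an alternative proof. Your $k=0$ argument via Hermite--Genocchi is exactly that alternative, carried out in full, and is correct.

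For $k\ge 1$ you correctly diagnose the obstruction (Rolle is unavailable, and differentiating the Hermite--Genocchi integral for $g$ introduces $f^{(m+1)},\dots,f^{(m+k)}$), but be aware that what Lavie actually does---at least as the paper reports it---is \emph{not} a Leibniz-plus-cancellation computation on $\omega g$. It is the one-zero-at-a-time induction above: peel off a factor $(z-\alpha)$, apply the inductive hypothesis to $f/(z-\alpha)$ with $m-1$ zeros, and use the displayed identity with $k=m-1$ to bound $\max_R\bigl|(f/(z-\alpha))^{(m-1)}\bigr|$ by $\max_R|f^{(m)}|$. This sidesteps entirely the higher derivatives $f^{(m+j)}$ and the cancellation hunt you anticipate. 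So your plan is valid in spirit (and your $k=0$ step is the clean base), but if you intend to ``follow Lavie'' for $k\ge1$, you will find an induction rather than the algebraic cancellations you sketched; the latter can be made to work, e.g.\ by integration by parts in the simplex variables, but it is heavier than the inductive route.
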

In \cite{Lav}, this inequality is proved by induction, using the formula 
\[ \frac{d^k}{dz^k} \frac{f(z)}{\alpha - z} = (\alpha - z)^{-k-1} \int_\alpha^z \frac{f^{(k+1)}(\zeta)}{(\alpha - \zeta)^k} d\zeta~,\]
valid if $f(\alpha) = 0$. As mentioned in \cite{Lav},  (\ref{eq:lavie}) can be also proved using the Hermite formula for divided differences.
\begin{rem}\label{r:lag}
By an approximation argument, the conditions of the lemma can be relaxed as follows:
(a) if $R = \overline {\operatorname{int} R}$, then the lemma remains valid if instead of assuming that $f$ is analytic in $R$, we assume  that $f$ analytic in $\operatorname{int} R$ and that $f, f', f',\cdots, f^{(m)}$ are uniformly continuous in $R$. (b) If $R \subset \mathbb R$, it suffices to assume that $f \in C^m(R)$. 
\end{rem}

\begin{proof}[Proof of Lemma~\ref{l:1}]
Recall (see \cite{RD}) that the Stirling numbers of the second kind are defined via
\[ \stirling{k}{l} = \frac{1}{l!} \sum_{j=0}^l (-1)^{l-j} \binom{l}{j} j^k~,\]
so that
\[ n^k = \sum_{l=0}^k \stirling{k}{l} \, n(n-1)\cdots (n-l+1)~,\]
and that 
\[ 0 \leq \stirling{k}{l} \leq \frac12 \binom{k}{l} l^{k-l} \leq \frac12 k^{2(k-l)}~.\]
Then
\[\begin{split} |\phi_f^{(2k)}(0)| &= \left| \sum_{n \geq 0} a_n n^k \right|
\leq \sum_{l=0}^k \stirling{k}{l} \left| \sum_{n \geq 0} a_n n (n-1) \cdots (n-l+1) \right| \\
&= \sum_{l=0}^k \stirling{k}{l} |f^{(l)}(1)|
\leq \frac12 \sum_{l=0}^k k^{2(k-l)}  |f^{(l)}(1)|~.
\end{split}\]
By Lemma~\ref{l:lag} and the subsequent Remark~\ref{r:lag}, we have:
\[ |f^{(l)}(1)| \leq \frac{(2\epsilon)^{m-l}}{(m-l)!}  M_{2m}~,\]
hence
\[\begin{split} |\phi_f^{(2k)}(0)| &\leq \frac12 \sum_{l=0}^k k^{2(k-l)}  \frac{(2\epsilon)^{m-l}}{(m-l)!}  M_{2m}\\
&\leq \frac12 \frac{(2\epsilon)^{m-k}}{(m-k)!} M_{2m} \sum_{l=0}^k \left(\frac{2k^2 \epsilon}{m-k}\right)^{k-l} \\
&= \frac12 \frac{(2\epsilon)^{m-k}}{(m-k)!} M_{2m} \sum_{l=0}^k \left(\frac{4k^2 \epsilon}{m}\right)^{k-l} \leq \left(\frac{4e\epsilon}{m}\right)^{m-k} M_{2m}~, 
\end{split}\]
provided that $m \geq \max(2k, 8k^2\epsilon)$.
\end{proof}

\begin{proof}[Proof of Lemma~\ref{l:2} (F.~Nazarov)]
Define a sequence of independent random variables $X_j$ so that $X_j \sim \operatorname{Unif}[-\frac{\pi}{\sqrt{j}}, \frac{\pi}{\sqrt{j}}]$, and let $S_N = X_1 + \cdots + X_N$. Then 
\[ |S_N| \leq \pi \sum_{j=1}^N \frac{1}{\sqrt{j}} \leq 2\pi \sqrt{N} \]
and 
\[ g_N(\xi) = \mathbb E \cos(\xi S_N) = \mathbb E \exp(i \xi S_N) = \prod_{j=1}^N \frac{\sin \frac{\pi \xi}{\sqrt{j}}}{\frac{\pi \xi}{\sqrt{j}}}~. \]
Therefore 
\[ \mathbb E \phi(S_N) = \sum_{n \geq 0} a_n g_N(\sqrt{n}) = a_0 + \sum_{n \geq N+1} a_n g_N(\sqrt n)~. \] 
Now, for $\xi \geq \sqrt N$
\[ |g_N(\xi)| \leq \prod_{j=1}^N \frac{\sqrt j}{\pi \xi} \leq \prod_{j=1}^N \frac{1}{\pi} = \pi^{-N}~, \]
therefore
\[ |\mathbb E \phi(S_N)| \geq |a_0| - \sum_{n \geq N+1} \pi^{-N} |a_n| \geq e^{-A} - \pi^{-N}~. \]
Letting $N = \lceil A \rceil$, we obtain that there exists $x \in [0,  2\pi \sqrt{\lceil A \rceil}]$ such that \[ |\phi(x)| \geq e^{-A} ( 1 - e/\pi) \geq e^{-A - 3}~.\] 
For $A \geq \frac12$, $2\pi \sqrt{\lceil A \rceil}  \leq 9 \sqrt{A}$, as claimed. For $A < \frac 12$,
\[ |\phi(0)| \geq e^{-1/2} - (1 - e^{-1/2}) \geq e^{-3} \geq e^{-A-3}~.\qedhere\] 
\end{proof}

\begin{proof}[Proof of Lemma~\ref{l:bang}] We reproduce the original argument of Bang \cite{Bang}. It suffices to show that if $x \in B_p(\phi)$ and $h = |y - x| \leq \frac{1}{e} \frac{M_{p-1}}{M_p}$, then $y \in B_{p-1}(\phi)$. Expanding $\phi$ in a Taylor series, we have for $0 \leq k < p-1$:
\[\begin{split} 
|\phi^{(k)}(y)| &\leq \sum_{j=0}^{p-k-1} |\phi^{(k+j)}(x)| \frac{h^j}{j!}  + |\phi^{(k+j)}(y_1)| \frac{h^{p-k}}{(p-k)!} \\
&\leq \sum_{j=0}^{p-k} e^{k+j-p} M_{k+j} \|\phi\|_{\mathfrak Q_M} \frac{h^j}{j!} ~. \end{split}\]
Now we bound $M_{k+j} \leq M_k (M_{p}/M_{p-1})^{j}$ and obtain:
\[\begin{split} |\phi^{(k)}(y)| &\leq e^{k-p} M_k \|\phi\|_{\mathfrak Q_M} \sum_{j=0}^{p-k-1} \frac{e^j (M_{p}/M_{p-1})^j h^j}{j!} \\
&\leq e^{k-p} M_k \|\phi\|_{\mathfrak Q_M}  e^{eh \frac{M_{p}}{M_{p-1}}}  \leq e^{k-p+1} M_k \|\phi\|_{\mathfrak Q_M}~.\qedhere
\end{split}\]
\end{proof}

\paragraph{Acknowledgement} I am grateful to A.\ Borichev, M.\ Sodin, J.\ Stoyanov, and  A.\ Volberg for helpful comments, and to F.~Nazarov for explaining me the proof of the current Lemma~\ref{l:2}.

\end{document}